\newtheorem{theorem}{Theorem}[section]
\newtheorem{thm}[theorem]{Theorem}
\newtheorem{lemma}[theorem]{Lemma}
\newtheorem{lem}[theorem]{Lemma}
\newtheorem{corollary}[theorem]{Corollary}
\newtheorem{cor}[theorem]{Corollary}
\theoremstyle{remark}
\newtheorem{example}{Example}
\newtheorem{remark}{Remark}
\def\ep{\varepsilon}
\def\ph{\varphi}
\def\Z{{\mathbb Z}}
\def\det{{\mathrm{det}}}
\def\LSym{\mathrm{LSym}}
\def\C{{\mathbb C}}
\def\D{{\overline D}}
\def\e{{\tilde e}}
\def\k{{ \kappa}}
\def\uqsln{{U_q'({\mathfrak {\hat {sl_n}}})}}
\newcommand\remind[1]{{\bf ** #1 **}}
\title{}
 \author{Thomas Lam}\address
 {Department of Mathematics\\ University of Michigan\\ Ann Arbor\\ MI 48109 USA.}
 \date{\today}
 \email{tfylam@umich.edu}
 \urladdr{http://www.math.lsa.umich.edu/\~{ }tfylam}
 \thanks{T.L. was supported by NSF grant DMS-0652641 and DMS-0901111, and by a Sloan Fellowship.}
 \author{Pavlo Pylyavskyy}\address
{Department of Mathematics\\ University of Michigan\\ Ann Arbor\\ MI 48109 USA.}
 \email{pavlo@umich.edu}
 \urladdr{http://sites.google.com/site/pylyavskyy/}
 \thanks{P.P. was supported by NSF grant DMS-0757165.}
\begin{document}
\begin{abstract}
We study products of the affine geometric crystal of type $A$ corresponding to symmetric powers of the standard representation.  The quotient of this product by the $R$-matrix action is constructed inside the unipotent loop group.  This quotient crystal has a semi-infinite limit, where the crystal structure is described in terms of limit ratios previously appearing in the study of total positivity of loop groups.
%
\end{abstract}
\title{Affine geometric crystals in unipotent loop groups}
\maketitle

\section{Introduction}
{\it Geometric crystals} were invented by Berenstein and Kazhdan \cite{BK,BK2} as birational analogues of Kashiwara's crystal graphs.  Suppose $X$ is a geometric crystal.  Then the product $X^{m}$ is also a geometric crystal, and in certain cases \cite{KNO} one has a (birational) $R$-matrix $R: X \times X \to X \times X$ giving rise to a birational action of the symmetric group $S_m$ on $X^{m}$.  Since the $R$-matrix is an isomorphism of geometric crystals, the crystal structure of $X^{m}$ can be considered as $S_m$-invariants of $X^{m}$.  Our investigations began with expressing the crystal structure in terms of the invariant rational functions $\C(X^{m})^{S_m}$ in the special case that $X = X_M$ is the {\it basic geometric crystal} corresponding to symmetric powers of the standard representation of $\uqsln$.  Equivalently, we construct the {\it quotient crystal} of $X_M^m$ by $S_m$.  It turns out that this quotient crystal can be constructed inside the unipotent loop group $U$ in an extremely natural way.

The {\it basic geometric crystal} $X_M$ of type $A_{n-1}^{(1)}$ is the variety $$X_M = \{x=(x^{(1)},x^{(2)},\ldots,x^{(n)}) \mid x^{(i)} \in \C^{*}\}=\C^n,$$ equipped with a distinguished collection of rational functions $\ep_i(x) = x^{(i+1)}$, $\ph_i(x) = x^{(i)}$, $\gamma_i(x) = x^{(i)}/x^{(i+1)}$ and a collection of rational $\C^*$-actions $e_i^c:  X_M \to X_M$, $c \in \C^*$ satisfying certain relations (see Section \ref{sec:geom}).  Our $X_M$ is essentially the geometric crystal ${\mathcal B}_L(A_{n-1}^{(1)})$ of Kashiwara, Nakashima, and Okado \cite[Section 5.2]{KNO}, with the dependence on $L$ removed.  It is a geometric analogue \cite{KNO1} of a limit of perfect (combinatorial) crystals, the latter playing an important role in the theory of vertex models.

Now consider the product $X= X_1 \times X_2 \times \cdots \times X_m$, where each $X_i \simeq X_{M}$.  In this case, the birational $R$-matrix $R_i: X_i \times X_{i+1} \to X_{i+1} \times X_{i}$ has been explicitly calculated \cite{Y}.  This same rational transformation appeared in the study of total positivity in loop groups.  Let $G= GL_n(\C((t)))$ denote the formal loop group, and let $U\subset G$ denote the maximal unipotent subgroup of $G$.  In \cite{LP}, we defined certain elements $M(x^{(1)},\ldots,x^{(n)}) \in U$ (see Section \ref{sec:whirlchev}), called {\it whirls}, depending on $n$ parameters $x^{(1)},\ldots,x^{(n)}$.  We showed that any totally nonnegative element $g \in U_{\geq 0}$ whose matrix entries were polynomials, was a product of such whirls.  It turns out that generically there is a unique non-trivial rational transformation of parameters $(x,y) \mapsto (u,w)$ such that $$M(x^{(1)},x^{(2)},\ldots,x^{(n)}) M(y^{(1)},y^{(2)},\ldots,y^{(n)}) = M(u^{(1)},u^{(2)},\ldots,u^{(n)}) M(w^{(1)},w^{(2)},\ldots,w^{(n)}).$$ Up to a shift of indices,  this transformation coincides with the birational $R$-matrices $R_i$ described above. 

Let $U^{\leq m} \subset U$ denote the closure of the set of elements $\{g = \prod_{i=1}^m M(x_i)\}$ which can be expressed as the product of $m$ whirls.  We show that $\C(U^{\leq m}) = \C(X_M^m)^{S_m}$ and that the geometric crystal structure of $X_M^m$ descends to $U^{\leq m}$, where it can be described explicitly in terms of left and right multiplication by one-parameter subgroups of $U$ (Theorems \ref{thm:Ucrystal} and \ref{thm:einv} and Corollary \ref{cor:same}).  This is reminiscent of the construction \cite{BK2} of a geometric crystal from a unipotent bicrystal, though $U^{\leq m}$ is not closed under multiplication by $U$.

One intriguing observation we make is that the geometric crystal $U^{\leq m}$ has a limit as $m \to \infty$, which morally one may think of as a quotient of the semi-infinite product $X_M^{\infty}$.  It gives rise to a ``geometric crystal'' structure on the whole unipotent loop group $U$, where $\ep_k, \ph_k$ are no longer rational functions, but are asymptotic {\it limit ratios} of the matrix coefficients of $U$ (Theorem \ref{thm:infinity}).  These limit ratios played a critical role in the factorization of totally nonnegative elements \cite{LP}.  It points to a deeper connection which we have yet to understand.


The matrix coefficients of $U^{\leq m}$ give a natural set of (algebraically independent) generators of $\C(X_M^m)^{S_m}$.  These matrix coefficients are the {\it loop elementary symmetric functions} $e_r^{(s)}(x_1,\ldots,x_m)$.  The invariants also contain distinguished elements $s_\lambda^{(s)}(x_1,\ldots,x_m)$, called the {\it loop Schur functions}.  It was observed in \cite{LP2} that the (birational) intrinsic energy function of $X_M^m$ can be expressed as a loop Schur function of dilated staircase shape. In particular, energy is a polynomial, unlike the rational functions $\ep_k, \ph_k$.  We explicitly describe the crystal operator action $e_k^c$ on loop Schur functions (Theorem \ref{thm:schuraction}).

\section{Products of geometric crystals}


\subsection{Geometric crystals}\label{sec:geom}
We shall use \cite{KNO} as our main reference for affine geometric crystals.
In this paper we shall consider affine geometric crystals of type $A$.  Fix $n > 1$.  Let $A= (a_{ij})_{i,j \in \Z/n\Z}$ denote the $A_{n-1}^{(1)}$ Cartan matrix.  Thus if $n > 2$ then $a_{ii} = 2$, $a_{ij} = -1$ for $|i-j|=1$, and $a_{ij}=0$ for $|i-j| > 1$.  For $n = 2$, we have $a_{11} = a_{22}=2$ and $a_{12}=a_{21} = -2$.  

Let $(X,\{e_i\}_{i \in \Z/n\Z},\{\ep_i\}_{i\in\Z/n\Z},\{\gamma_i\}_{i \in \Z/n\Z})$ be an affine geometric crystal for $A_{n-1}^{(1)}$.  Thus, $X$ is a complex algebraic variety, $\ep_i: X \to \C$ and $\gamma_i: X \to \C$ are rational functions, and $e_i: \C^{*} \times X \to X$ $((c,x) \mapsto e_i^c(x))$ is a rational $\C^{*}$-action, satisfying:
\begin{enumerate}
\item
The domain of $e_i^1: X \to X$ is dense in $X$ for any $i \in \Z/n\Z$.
\item
$\gamma_j(e_i^c(x)) = c^{a_{ij}} \gamma_j(x)$ for any $i,j \in \Z/n\Z$.
\item
$\ep_i(e_i^c(x)) = c^{-1}\ep_i(x)$.
\item 
for $i \neq j$ such that $a_{ij} = 0$ we have
$e_i^c e_j^{c'} = e_j^{c'} e_i^{c}$.
\item
for $i \neq j$ such that $a_{ij} = -1$ we have
$e_{i}^c e_{j}^{cc'} e_{i}^{c'} = e_j^{c'} e_i^{cc'} e_{j}^c$.
\end{enumerate}
We often abuse notation by just writing $X$ for the geometric crystal.  We define $\ph_i = \gamma_i \ep_i$, and sometimes define a geometric cyrstal by specifying $\ph_i$ and $\ep_i$, instead of $\gamma_i$.

\subsection{Products}
If $X, X'$ are affine geometric crystals, then so is $X \times X'$ \cite{BK, KNO}.  Let $(x,x') \in X \times X'$.  Then
\begin{equation}
\label{E:epphproduct}
\ep_k(x,x')= \frac{\ep_k(x) \ep_k(x')}{\ph_k(x')+\ep_k(x)} \qquad
\ph_k(x,x')=\frac{\ph_k(x) \ph_k(x')}{\ep_k(x)+ \ph_k(x')}
\end{equation}
and
\begin{align*}
e_k^c(x \otimes x') = (e_k^{c^+}x,e_k^{c/c^+}x')
\end{align*}
where
\begin{equation}\label{eq:c+}
c^+ = \frac{c\ph_k(x')+\ep_k(x)}{\ph_k(x') +\ep_k(x)}.
\end{equation}

\begin{remark}
Note that our notations differ from those in \cite{KNO} by swapping left and right in the product, and this agrees with \cite{LP2}. Note also that the birational formulae we use should be tropicalized using $(\min, +)$ (rather than $(\max,+)$) operations to yield the formulae for combinatorial crystals.
\end{remark}

\subsection{The basic geometric crystal}
We now introduce a geometric crystal $X_M$ which we call the {\it basic geometric crystal} of type $A_{n-1}^{(1)}$.  This is a geometric analogue of a limit of perfect crystals.  It is essentially the geometric crystal ${\mathcal B}_L(A_{n-1}^{(1)})$ of \cite[Section 5.2]{KNO}.

We have $X_M = \{x=(x^{(1)},x^{(2)},\ldots,x^{(n)}) \mid x^{(i)} \in \C^{*}\}$ and
$$
\ep_i(x) = x^{(i+1)} \qquad \ph_i(x) = x^{(i)} \qquad \gamma_i(x) = x^{(i)}/x^{(i+1)}
$$ 
and
$$
e_i^c: (x^{(1)},x^{(2)},\ldots,x^{(n)}) \longmapsto (x^{(1)},\ldots,cx^{(i)},c^{-1} x^{(i+1)}\ldots,x^{(n)}) .
$$
That $X_M$ is an affine geometric crystal is shown in \cite{KNO}.

\section{Geometric crystal structures on the unipotent loop group}
\subsection{Unipotent loop group}
Let $G= GL_n(\C((t)))$ denote the formal loop group, consisting of non-singular $n\times n$ matrices with complex formal Laurent series coefficients.  If $g =(g_{ij})_{i,j=1}^n \in G$ where $g_{ij}= \sum_k g_{ij}^k t^k$, we let $Y = Y(g)= (y_{rs})_{r,s\in \Z}$ denote the infinite periodic matrix defined by $y_{i+kn,j+k'n} = g_{ij}^{k'-k}$ for $1 \leq i,j\leq n$.  For the purposes of this paper we shall always think of formal loop group elements as infinite periodic matrices.

The unipotent loop group $U \subset G$ is defined as those $Y$ which are upper triangular, with 1's along the main diagonal.  We let $U^{\leq m} \subset U$ denote the subset consisting of infinite periodic matrices supported on the $m$ diagonals above the main diagonal.  Thus if $Y \in U$ then $Y \in U^{\leq m}$ if and only if $y_{ij} = 0$ for $j-i > m$.  It is clear that $U^{\leq m} \simeq \C^{mn}$.

\subsection{Whirls and Chevalley generators}\label{sec:whirlchev}
For $k \in \Z/n\Z$ and $a \in \C$, define the Chevalley generator $u_k(a) \in U$ by
$$
(u_k(a))_{ij} = \begin{cases} 
1& \mbox{if $j=i$}\\
a & \mbox{if $j=i+1=k+1$} \\
0& \mbox{otherwise.}
\end{cases}
$$
These are standard one-parameter subgroups of $U$ corresponding to the simple roots.

For $(x^{(1)},x^{(2)},\ldots,x^{(n)}) \in \C^n$, define the {\it {whirl}} $M(x) = M(x^{(1)},x^{(2)},\ldots,x^{(n)})$ \cite{LP} to be the infinite periodic matrix with 
$$
M(x)_{i,j} = 
\begin{cases}
1 & \text{if $j=i$;}\\
x^{(i)} & \text{if $j=i+1$;}\\
0 & \text{otherwise.}
\end{cases}
$$
Here and elsewhere the upper indices are to be taken modulo $n$.

\subsection{Geometric crystal structure}
Let $m \in \{1,2,\ldots\}$.  For $k \in \Z/n\Z$, define rational functions $\ep_k$ 
and $\ph_k$ on $U$ by 
$$
\ep_k(Y) = \frac{y_{k+1,k+m+1}}{y_{k+1,k+m}}  \qquad \ph_k(Y) = \frac{y_{k,k+m}}{y_{k+1,k+m}}.
$$
(These rational functions, and indeed the geometric crystal structure as well, extend to $G$.  However, we shall only consider the unipotent loop group.)

\begin{theorem}[Unipotent loop group geometric crystal for finite $m$]
\label{thm:Ucrystal}
Fix $m \in \{1,2,\ldots\}$.  The formal loop group $U$, the rational functions $\ep_k,\ph_k: U \to \C$, and the map 
\begin{equation}\label{E:action}
e_k^c: Y \longmapsto u_k((c-1) \ph_k)\; Y\; u_{k+m}((c^{-1}-1) \ep_k)
\end{equation}
form a geometric crystal.  Furthermore, the subvariety $U^{\leq m}$ is invariant under $e_k^c$, and thus is a geometric subcrystal.
\end{theorem}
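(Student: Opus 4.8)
The plan is to verify the five axioms of Section~\ref{sec:geom} by direct computation with periodic matrices, exploiting the \emph{locality} of $e_k^c$: left multiplication by $u_k(a)$ adds $a$ times row $i{+}1$ to row $i$ for every $i\equiv k\pmod n$ and fixes all other rows, while right multiplication by $u_{k+m}(b)$ adds $b$ times column $j{-}1$ to column $j$ for every $j\equiv k+m+1\pmod n$ and fixes all other columns. Since these two operations commute, write $e_k^c(Y)=u_k(a)\,Z$ with $Z=Y\,u_{k+m}(b)$, $a=(c-1)\ph_k(Y)$, $b=(c^{-1}-1)\ep_k(Y)$, and track the entries of rows $k,k+1,k+2$ and columns $k+m,k+m+1,k+m+2$. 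Axiom~(1) is immediate, since at $c=1$ both parameters vanish and $e_k^1=\mathrm{id}$ is defined on all of $U$. For~(3): column $k+m$ and row $k+1$ are untouched, so $(e_k^cY)_{k+1,k+m}=y_{k+1,k+m}$, while $(e_k^cY)_{k+1,k+m+1}=y_{k+1,k+m+1}+b\,y_{k+1,k+m}=c^{-1}y_{k+1,k+m+1}$ — which is exactly what forces the choice of $b$ — so $\ep_k(e_k^cY)=c^{-1}\ep_k(Y)$. Similarly $(e_k^cY)_{k,k+m}=y_{k,k+m}+a\,y_{k+1,k+m}=c\,y_{k,k+m}$, so $\ph_k\mapsto c\,\ph_k$ and $\gamma_k\mapsto c^2\gamma_k=c^{a_{kk}}\gamma_k$. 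That $e_k$ is a rational $\C^*$-action, $e_k^c e_k^{c'}=e_k^{cc'}$, then follows from $u_k(\alpha)u_k(\beta)=u_k(\alpha+\beta)$ together with these rescalings of $\ph_k,\ep_k$.

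The rest of axiom~(2) is the same bookkeeping: for each $j$ one asks which of the three entries defining $\gamma_j=\ph_j/\ep_j$ lies in a row $\equiv k$ or a column $\equiv k+m+1\pmod n$. For $j\equiv k\pm1$ exactly one of them is rescaled (by $c$ or $c^{-1}$) and the others are fixed, giving $\gamma_j\mapsto c^{-1}\gamma_j=c^{a_{kj}}\gamma_j$; for all other $j$ none is touched. (For $n=2$ the cases $j=k\pm1$ coincide and the $n$-periodicity of $Y$ produces the exponent $-2$.) The invariance of $U^{\leq m}$ falls out of the same analysis: the only matrix positions that could become nonzero outside the band are the $(p,p+m+1)$ with $p\equiv k$, and one computes $(e_k^cY)_{k,k+m+1}=b\,y_{k,k+m}+a\,y_{k+1,k+m+1}+ab\,y_{k+1,k+m}$; substituting $a$ and $b$ factors out a common monomial times $(c^{-1}-1)+(c-1)+(c-1)(c^{-1}-1)=0$, so this entry vanishes and no new diagonal is created.

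Axiom~(4) concerns $j$ with $a_{kj}=0$, i.e.\ $j\not\equiv k,k\pm1$. Then the rows and columns moved by $e_k^c$ are disjoint from those on which $\ep_j,\ph_j$ depend, and symmetrically, so the four Chevalley parameters occurring in $e_k^c e_j^{c'}$ do not depend on the order of application; moreover $u_k(a)$ and $u_j(a')$ commute (a product of the off-diagonal pieces can only contribute a new entry when $j\equiv k\pm1$), and likewise $u_{k+m}(b)$ and $u_{j+m}(b')$. Hence $e_k^c e_j^{c'}=e_j^{c'}e_k^c$.

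Axiom~(5), the braid relation $e_k^c e_{k+1}^{cc'}e_k^{c'}=e_{k+1}^{c'}e_k^{cc'}e_{k+1}^c$ (taking $j=k+1$; the case $j=k-1$ is symmetric), is the main obstacle. Expanding, the left side equals $u_k(p_3)u_{k+1}(p_2)u_k(p_1)\,Y\,u_{k+m}(q_1)u_{k+m+1}(q_2)u_{k+m}(q_3)$ and the right side has the mirrored arrangement, so by the Chevalley braid relation $u_k u_{k+1}u_k=u_{k+1}u_k u_{k+1}$ (valid, up to the standard $A_2$ parameter substitution, in the rank-two unipotent subgroup generated by $u_k,u_{k+1}$, and likewise for $u_{k+m},u_{k+m+1}$) it suffices to check that the left parameter triples on the two sides correspond under that substitution, and likewise the right triples. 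These parameters are obtained by following $\ph_k,\ph_{k+1},\ep_k,\ep_{k+1}$ along the two chains of operators, so what is genuinely needed is the exact law by which $e_k^c$ transforms $\ph_{k+1}$ and $\ep_{k+1}$, not merely $\gamma_{k+1}$. The step I expect to demand the most care is to organize this cleanly: within the window of rows $k,k+1,k+2$ and columns $k+m,k+m+1,k+m+2$ the pairs $(\ep_k,\ph_k)$ and $(\ep_{k+1},\ph_{k+1})$ transform exactly by the tensor-product rules \eqref{E:epphproduct} and \eqref{eq:c+} for two copies of the basic geometric crystal, so axiom~(5) reduces to the braid relation for $X_M$, which holds by \cite{KNO}. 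Finally, since (1)--(5) are identities among rational functions in finitely many entries $y_{ij}$, it is enough to verify them with those entries treated as free parameters (say on a band $U^{\leq m}$), and the identities then hold on all of $U$.
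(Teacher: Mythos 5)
Most of your computation is sound, and your route is genuinely different from the paper's: the paper deliberately omits a direct verification and instead deduces Theorem \ref{thm:Ucrystal} from Corollary \ref{cor:same}, i.e.\ from the identification of $U^{\leq m}$ with the quotient of the product crystal $X_M^m$ (which is a geometric crystal by \cite{KNO}) via Theorems \ref{thm:einv} and \ref{thm:e}. Your checks of axioms (1)--(4), of the action property $e_k^ce_k^{c'}=e_k^{cc'}$, and of the invariance of $U^{\leq m}$ (the cancellation $(c^{-1}-1)+(c-1)+(c-1)(c^{-1}-1)=0$ on the $(m{+}1)$-st diagonal) are all correct and complete.

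The gap is axiom (5), which is the only nontrivial axiom, and your treatment of it does not constitute a proof. The reduction you propose --- ``within the window the pairs $(\ep_k,\ph_k)$ and $(\ep_{k+1},\ph_{k+1})$ transform exactly by the tensor-product rules \eqref{E:epphproduct} and \eqref{eq:c+}, so axiom (5) reduces to the braid relation for $X_M$'' --- is not well formulated: \eqref{E:epphproduct} and \eqref{eq:c+} describe how a \emph{single} index $k$ acts on a \emph{two-fold tensor product}, whereas the Verma relation involves two adjacent indices acting on a single object, so there is no direct sense in which the $3\times 3$ window ``transforms by the tensor-product rules.'' Making such an identification precise is essentially the content of the paper's Theorem \ref{thm:e}, i.e.\ it is where the real work lives, not a citation to \cite{KNO}. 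Moreover the computation is genuinely more entangled than in the $m=\infty$ case: there the paper's proof of Theorem \ref{thm:infinity} rests on the decoupling $\ph_j(Yu_k(a))=\ph_j(Y)$ and $\ep_j(u_k(a)Y)=\ep_j(Y)$, so the left Chevalley string is governed by the $\ph$'s alone and the right by the $\ep$'s alone. For finite $m$ this fails: $\ph_{k+1}(Y)=y_{k+1,k+m+1}/y_{k+2,k+m+1}$ involves column $k+m+1$ and hence is changed by the \emph{right} factor $u_{k+m}\bigl((c^{-1}-1)\ep_k\bigr)$ of $e_k^c$ (and dually $\ep_{k-1}$ is changed by the left factor), so the parameters $p_1,p_2,p_3$ in your left string depend on the $\ep$'s as well as the $\ph$'s, and the clean rank-two Chevalley computation does not go through verbatim. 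One must either carry out the full $3\times 3$-window calculation of both strings, or follow the paper and deduce (5) from the quotient construction. A further small caveat: your closing remark that it suffices to check the identities ``on a band $U^{\leq m}$'' is not quite right as stated, since on $U^{\leq m}$ the entries $y_{ij}$ with $j-i>m$ are forced to vanish rather than being free; this is harmless only once one has reduced (5) to an identity between the two Chevalley strings, whose parameters involve only entries with $j-i\leq m$.
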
 

This result is not difficult to prove by direct calculation, but we omit the proof as it essentially follows from Corollary \ref{cor:same} (which shows that one has a geometric crystal structure on $U^{\leq m}$).

\begin{remark}
The formula \eqref{E:action} is essentially the same formula as that used by Berenstein and Kazhdan \cite[(2.14)]{BK2} to define a geometric crystal from a unipotent crystal.  Indeed, $U$ clearly has a $U \times U$ action (though $U^{\leq m}$ does not).  However, despite the simplicity of our construction, we have been unable to put it inside their framework of unipotent crystals.
\end{remark}

\subsection{Asymptotic geometric crystal structure}
Ley $Y \in U$.  Define the limit ratios \cite{LP}
$$
\ep_k(Y) = \lim_{m \to \infty} \frac{y_{k-m,k+1}}{y_{k-m,k}}  \qquad \ph_k(Y) = \lim_{m \to \infty} \frac{y_{k,k+m}}{y_{k+1,k+m}}
$$
assuming the limits exist.  
Since $\ep_k$ and $\ph_k$ are not rational functions, we cannot use them to construct an algebraic geometric crystal.  However, treating $\ep_k$ and $\ph_k$ as functions with a restricted domain, we can still formally construct a geometric crystal using the formula \eqref{E:action}.  

\begin{theorem}[Unipotent loop group geometric crystal for $m = \infty$] \label{thm:infinity}
The functions $\ep_k,\ph_k: U \to \C$, and the map 
$$
e_k^c: Y \longmapsto u_{k}((c-1) \ph_k)\; Y\; u_{k}((c^{-1}-1)\ep_k)
$$
satisfy the relations (2),(3),(4),(5) of a geometric crystal on $U$ (see Section \ref{sec:geom}). 
\end{theorem}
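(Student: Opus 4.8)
The plan is to obtain Theorem~\ref{thm:infinity} as a limit of the finite-$m$ geometric crystals of Theorem~\ref{thm:Ucrystal}, letting $m=\ell n\to\infty$ through multiples of $n$. The point of restricting to $m\in n\Z$ is that then the right-hand Chevalley generator in \eqref{E:action} is $u_{k+\ell n}=u_k$ (indices are taken mod $n$), so the finite-$m$ operator becomes
\[
e_k^{c,(\ell n)}\colon Y\longmapsto u_k\bigl((c-1)\ph_k^{(\ell n)}(Y)\bigr)\;Y\;u_k\bigl((c^{-1}-1)\ep_k^{(\ell n)}(Y)\bigr),
\]
which already has the shape of the $m=\infty$ operator. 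Moreover $\ph_k^{(\ell n)}(Y)=y_{k,k+\ell n}/y_{k+1,k+\ell n}$ is precisely the $(\ell n)$-th term of the sequence defining $\ph_k(Y)$, and, using the periodicity $y_{a,b}=y_{a+n,b+n}$, $\ep_k^{(\ell n)}(Y)=y_{k+1,k+\ell n+1}/y_{k+1,k+\ell n}=y_{k+1-\ell n,k+1}/y_{k+1-\ell n,k}$ is the $(\ell n-1)$-th term of the sequence defining $\ep_k(Y)$. Hence $\ph_k^{(\ell n)}\to\ph_k$ and $\ep_k^{(\ell n)}\to\ep_k$ on the domain where the limit ratios exist, and since each matrix entry of $u_k(a)Yu_k(b)$ is a polynomial of degree at most $2$ in $a,b$, the operators $e_k^{c,(\ell n)}$ converge entrywise to $e_k^c$.

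By Theorem~\ref{thm:Ucrystal} each of (2)--(5) holds identically for the finite-$m$ crystal, for every $\ell$; I would pass to the limit $\ell\to\infty$. The only thing that needs checking is that in an iterated composition such as $e_i^{c,(\ell n)}e_j^{c',(\ell n)}(Y)$ the scalars converge to those of the $m=\infty$ composition, e.g.\ $\ph_i^{(\ell n)}\bigl(e_j^{c',(\ell n)}Y\bigr)\to\ph_i\bigl(e_j^{c'}Y\bigr)$. This is a short bookkeeping argument: left multiplication by $u_j(\cdot)$ alters only rows congruent to $j\bmod n$ and right multiplication by $u_j(\cdot)$ alters only columns congruent to $j+1\bmod n$, so the two entries of $e_j^{c',(\ell n)}Y$ feeding $\ph_i^{(\ell n)}$ differ from those of $Y$ by adding finitely many matrix entries with convergent coefficients; after dividing by the appropriate entry, every quotient that survives is, up to a reciprocal, one of the limit ratios $\ep_\bullet,\ph_\bullet$ of $Y$, together with bounded ``consecutive-entry'' quotients along a fixed row or column. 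Granting this and its iteration, for (2) and (3) the relevant limit ratios of the $e$-images converge to those of the $m=\infty$ crystal, and for (4) and (5) every matrix entry of both sides is a continuous function of finitely many convergent scalars and the fixed entries of $Y$; in all cases the identities survive in the limit.

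One could instead verify (2)--(5) directly. The same computations give the transformation rules: right multiplication by $u_j(b)$ sends $\ep_j\mapsto\ep_j+b$ and $\ep_{j+1}\mapsto\ep_{j+1}\ep_j/(\ep_j+b)$, fixes $\ep_k$ for $k\neq j,j+1$, and fixes every $\ph_k$; left multiplication by $u_j(a)$ sends $\ph_j\mapsto\ph_j+a$ and $\ph_{j-1}\mapsto\ph_{j-1}\ph_j/(\ph_j+a)$, fixes $\ph_k$ for $k\neq j,j-1$, and fixes every $\ep_k$. Feeding these into $e_k^c(Y)=u_k((c-1)\ph_k(Y))\,Y\,u_k((c^{-1}-1)\ep_k(Y))$ gives (2) and (3) at once (with the exponent $a_{kj}=-2$ emerging when $n=2$, where both modifications occur simultaneously). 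Relation (4) then follows because when $a_{ij}=0$ the generators $u_i(\cdot),u_j(\cdot)$ commute in $U$ and $e_i^c,e_j^{c'}$ leave each other's defining functions untouched, so the two products around $Y$ agree after commuting; relation (5), for $a_{ij}=-1$, follows from the rank-two braid identity $u_i(a)u_j(b)u_i(c)=u_j(\tfrac{bc}{a+c})\,u_i(a+c)\,u_j(\tfrac{ab}{a+c})$ applied to the Chevalley strings on each side of $Y$, combined with the transformation rules — this is exactly the verification that $X_M$ is a geometric crystal, now carried out with limit ratios in place of rational functions.

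The main obstacle in either route is analytic rather than algebraic: because $\ep_k$ and $\ph_k$ are genuine limits, not rational functions, one must be sure every limit taken along the way exists, which is precisely why the domain has to be restricted, and relation (5) in addition carries a lengthy scalar bookkeeping. I expect the cleanest way to organize the analytic side is to observe that all the auxiliary quantities that arise reduce to the limit ratios of \cite{LP} (up to reciprocals) together with a controlled family of growth-rate quotients already handled there, so that no genuinely new convergence statement is required.
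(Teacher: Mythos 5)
Your second route (the direct verification) is precisely the paper's proof: the paper establishes exactly the transformation rules you state --- $\ph_j(Yu_k(a))=\ph_j(Y)$, $\ep_j(u_k(a)Y)=\ep_j(Y)$, $\ph_k\mapsto\ph_k+a$ and $\ph_{k-1}\mapsto\ph_{k-1}/(1+a/\ph_k)$ under left multiplication by $u_k(a)$, with the mirror rules for $\ep$ under right multiplication --- then reads off (2), (3), (4) and obtains (5) from the braid identity $u_k(a)u_{k+1}(b)u_k(c)=u_{k+1}(bc/(a+c))\,u_k(a+c)\,u_{k+1}(ab/(a+c))$ applied to the Chevalley strings on each side of $Y$. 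So that part of your proposal is correct and is the intended argument. Your primary route, obtaining the theorem as a limit of Theorem \ref{thm:Ucrystal} along $m=\ell n\to\infty$, is genuinely different from the paper and is workable, but it carries two burdens the direct route avoids. First, the paper justifies Theorem \ref{thm:Ucrystal} only via Corollary \ref{cor:same}, i.e.\ as a crystal structure on $U^{\leq m}$, whereas a generic $Y$ admitting limit ratios lies in no $U^{\leq m}$; so you would first have to verify the finite-$m$ relations on all of $U$, a computation of essentially the same shape as the $m=\infty$ verification. Second, as the paper itself remarks, $\ep_k$ and $\ph_k$ are not continuous under entrywise convergence, so entrywise convergence of $e_k^{c,(\ell n)}(Y)$ to $e_k^c(Y)$ does not by itself yield $\ph_i^{(\ell n)}(e_j^{c',(\ell n)}Y)\to\ph_i(e_j^{c'}Y)$; your bookkeeping step --- using periodicity to identify the auxiliary consecutive-entry quotients with reciprocals of shifted limit ratios --- is the correct fix, but carrying it out (and iterating it for the three-fold compositions in (5)) amounts to re-deriving the same transformation rules the direct proof uses. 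In short, the limiting route is valid in outline but buys nothing over the direct verification, which is what the paper does.
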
 
Note that if $\ep_k$ and $\ph_k$ are defined at $Y \in U$, so is $e_k^c$, and in addition $e_k^c(Y)$ also
has this property.

\begin{proof}For simplicity we assume $n > 2$ and suppose that $\ph_k$ and $\ep_k$ are defined for $Y \in U$.  We have $\ph_j(Y u_k(a)) = \ph_j(Y)$ and $\ep_j(u_k(a) Y) = \ep_j(Y)$, and
$$
\ph_j(u_k(a) Y) =\begin{cases} \ph_k(Y) + a & \mbox{if $k = j$} \\ 
 \ph_{k-1}(Y)/(1 + a/\ph_k(Y)) &\mbox{if $k-1 = j$} \\
 \ph_j(Y) &\mbox{otherwise.}\end{cases}
$$
$$
\ep_j( Y u_{k}(a)) =\begin{cases} \ep_k(Y) + a & \mbox{if $k = j$} \\ 
 \ep_{k+1}(Y)/(1+ a/\ep_{k}(Y))& \mbox{if $k+1 = j$} \\
 \ep_j(Y) &\mbox{otherwise.} \end{cases}
$$
Relations (2),(3),(4) are immediate.  To obtain (5), one uses the relation $u_k(a)u_{k+1}(b)u_k(c) = u_{k+1}(bc/(a+c)) u_k(a+c) u_{k+1}(ab/(a+c))$ to get
\begin{align*}
&u_k\left((c'-1)\frac{c\phi_k}{1+(cc'-1)\phi_{k+1}/\phi_{k+1}}\right)
u_{k+1}((cc'-1)\phi_{k+1}) u_k((c-1)\phi_k)\\
&= u_{k+1}((c-1)c'\phi_{k+1})u_k((cc'-1)\frac{\phi_k}{1+(c'-1)\phi_k/\phi_k})u_{k+1}((c'-1)\phi_{k+1})
\end{align*}
where $\phi_k=\phi_k(Y)$.  A similar equality for $\ep_k$ gives (5).

The last statement of the theorem is straightforward.
\end{proof}

In place of axiom (1) of a geometric crystal, we may note that the domain of definition
of $\ep_k,\ph_k,e_k^c$ is dense in $U$ under (matrix-)entrywise convergence.  However,  $\ep_k$ and $\ph_k$ are not continuous in this topology.

\begin{remark}
The limit ratios $\ep_k$ and $\ph_k$ were introduced in \cite{LP} for the study of the totally nonnegative part $U_{\geq 0}$ of the loop group.  Indeed, these limits always exist for totally nonnegative elements which are not supported on finitely many diagonals.  They point to a deeper connection between total nonnegativity and crystals.
\end{remark}

\section{Product crystal structure and $S_m$-invariants}
\subsection{Birational $R$-matrix and invariants}
Let $X= X_1 \times X_2 \times \cdots \times X_m$ where each $X_i \simeq X_{M}$ is a basic affine geometric crystal.  We shall shift the indexing of the coordinates on $X_i$, so that
$$
X_i = \{x_i=(x_i^{(i)},x_i^{(i+1)},\ldots,x_i^{(i+n)})\}.
$$
That is, $\ph_1(x_i) = x_i^{(i)}$, and so on.

Define $$\k_{r}(x_j,x_{j+1}) = \sum_{s=0}^{n-1} (\prod_{t=1}^s x^{(r+t)}_{j+1} \prod_{t=s+1}^{n-1} x^{(r+t)}_j).$$  In the variables $x_i^{(r)}$, the birational $R$-matrix acts (see \cite[Proposition 3.1]{Y}) via algebra isomorphisms $s_1,s_2,\ldots,s_{m-1}$ of the field $\C(X)$ of rational functions in $\{x_i^{(r)}\}$, given by 
$$s_j(x^{(r)}_j) = \frac{x^{(r+1)}_{j+1}  \k_{r+1}(x_j,x_{j+1})}{\k_{r}(x_j,x_{j+1})} \text{\;\;\;\; and \;\;\;\;} s_j(x^{(r)}_{j+1}) = \frac{x^{(r-1)}_j \k_{r-1}(x_j,x_{j+1})}{\k_{r}(x_j,x_{j+1})}$$
and $s_j(x^{(r)}_k)=x^{(r)}_k$ for $k \neq j,j+1$.  The birational $R$-matrix acts as geometric crystal isomorphisms, and thus the crystal structure of $X$ descends to the invariants $\C(X)^{S_m}$.

For a sequence $x=(x_1,\ldots,x_m)$ of $n$-tuples of complex numbers, we define $M(x) \in U$ to be the product of whirls $M(x) = M(x^{(1)}_1,\ldots,x^{(n)}_1)\cdots M(x^{(1)}_m,\ldots,x^{(n)}_m)$.  The entries of $M(x)$ have the following description.  For $r \geq 1$ and $s \in \Z/n\Z$, define the {\it {loop elementary symmetric functions}}
$$
e_r^{(s)}(x_1,x_2,\ldots,x_m) = \sum_{1 \leq i_1 < i_2 < \cdots < i_r\leq m} x_{i_1}^{(s)} x_{i_2}^{(s+1)} \cdots x_{i_k}^{(s+k-1)}. $$
By convention we have $e_r^{(s)} = 1$ if $r = 0$, and $e_r^{(s)} = 0$ if $r < 0$.  Then by \cite[Lemma 7.5]{LP} we have
\begin{equation}
\label{E:Me}
M(x)_{i,j} = e^{(i)}_{j-i}(x_1,\ldots,x_m).
\end{equation}
Furthermore, it is shown in \cite[Section 6]{LP} that 
\begin{equation}\label{E:whirlcommute}
M(x_1)M(x_2) = M(s_1(x_1))M(s_1(x_2)).
\end{equation}

The ring $\LSym_m \subset \C[X]$ generated by the $e_r^{(s)}$ is called the ring of (whirl) loop symmetric functions. 

\begin{theorem} \label{thm:einv}
We have $\C(X)^{S_m} = \C(e_r^{(s)})$.  In particular, the $e_r^{(s)}$ are algebraically independent.  The map $X \to U^{\leq m}$ given by
$$
(x_1,\ldots,x_m) \mapsto M(x_1)M(x_2)\cdots M(x_m)
$$ 
identifies $\LSym_m = \C[e_r^{(s)}]$ with the coordinate ring $\C[U^{\leq m}]$.
\end{theorem}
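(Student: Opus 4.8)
The plan is to deduce everything from two facts about the morphism $\Phi\colon X\to U^{\leq m}$, $(x_1,\ldots,x_m)\mapsto M(x_1)M(x_2)\cdots M(x_m)$, whose component functions are the $e_r^{(s)}$ by \eqref{E:Me}: (a) each $e_r^{(s)}$ is invariant under the $S_m$-action on $X$ coming from the birational $R$-matrix, so that $\C(e_r^{(s)})\subseteq\C(X)^{S_m}$; and (b) for generic $x=(x_1,\ldots,x_m)\in X$ the fibre $\Phi^{-1}(\Phi(x))$ equals the $S_m$-orbit of $x$, and this orbit has $m!$ elements. Granting (a) and (b), the endgame is pure Galois theory. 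By (b) the $S_m$-action is generically free, hence faithful, so $\C(X)/\C(X)^{S_m}$ is Galois of degree $m!$. Also by (b), $\Phi$ has finite generic fibre, so $\overline{\Phi(X)}$ has dimension $\dim X=nm$; being a closed irreducible subvariety of $U^{\leq m}\cong\C^{nm}$, it is all of $U^{\leq m}$, so $\Phi$ is dominant. Then $\Phi^*\colon\C[U^{\leq m}]\to\C[X]$ is injective with image $\C[e_r^{(s)}]=\LSym_m$ by \eqref{E:Me}; this identifies the polynomial ring $\C[U^{\leq m}]$ on $nm$ generators with $\LSym_m$, and in particular shows the $e_r^{(s)}$ are algebraically independent. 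Finally $[\C(X):\C(e_r^{(s)})]=[\C(X):\C(U^{\leq m})]=\deg\Phi=m!$ by (b), while $[\C(X):\C(X)^{S_m}]=m!$; since $\C(e_r^{(s)})\subseteq\C(X)^{S_m}\subseteq\C(X)$, multiplicativity of field degrees forces $\C(e_r^{(s)})=\C(X)^{S_m}$.

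Fact (a) is immediate from \eqref{E:whirlcommute}: the Coxeter generator $s_j$ alters only the adjacent factors $M(x_j)M(x_{j+1})$, so the product $M(x_1)\cdots M(x_m)$ is fixed by every $s_j$, hence by all of $S_m$, hence so is each entry $e^{(i)}_{j-i}$. This invariance also yields the containment $S_m\cdot x\subseteq\Phi^{-1}(\Phi(x))$, which is the easy half of (b).

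The heart of the matter is the reverse containment together with the count: generically, $M(x_1)\cdots M(x_m)=M(y_1)\cdots M(y_m)$ forces $(y_1,\ldots,y_m)\in S_m\cdot(x_1,\ldots,x_m)$, and the orbit has full size $m!$. I would prove this by induction on $m$, peeling off the rightmost whirl. Write $P$ for the common value $M(y_1)\cdots M(y_m)$. Using $(M(w)^{-1})_{i,j}=(-1)^{j-i}w^{(i)}w^{(i+1)}\cdots w^{(j-1)}$, the condition that $P\,M(w)^{-1}$ lie in $U^{\leq m-1}$ — and hence, generically, be a product of $m-1$ whirls — is the $n\times n$ polynomial system $\sum_{k=i}^{i+m}(-1)^{i+m-k}P_{i,k}\,w^{(k)}w^{(k+1)}\cdots w^{(i+m-1)}=0$, one equation for each $i\in\Z/n\Z$. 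The key point is that, for generic $P=M(x_1)\cdots M(x_m)$, this system has exactly $m$ solutions $w$, namely the final components of the $m$ reorderings of $(x_1,\ldots,x_m)$ obtained by using the $R$-matrix to bring each $x_j$ into last position; applying the inductive hypothesis to each $P\,M(w)^{-1}$ then produces $m\cdot(m-1)!=m!$ factorizations of $P$, all lying inside $S_m\cdot(x_1,\ldots,x_m)$.

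The main obstacle is precisely this count — proving that the $w$-system admits no spurious solutions beyond the $m$ expected ones. One clean way to sidestep a brute-force resultant computation is to first establish $\deg\Phi=m!$ at a single totally positive point, where the whirl-factorization results of \cite{LP} and the positivity of the $R$-matrix keep the number of factorizations under control, and then transport the value of $\deg\Phi$ back to the generic complex point using upper semicontinuity of fibre cardinality, bounded below by $m!$ thanks to the orbit in (a). (Alternatively, one can establish dominance of $\Phi$ independently by a direct Jacobian computation on the $e_r^{(s)}$, using the recursion $e_r^{(s)}(x_1,\ldots,x_m)=e_r^{(s)}(x_2,\ldots,x_m)+x_1^{(s)}e_{r-1}^{(s+1)}(x_2,\ldots,x_m)$, and then only the degree count $\deg\Phi\le m!$ remains.) Either way, ruling out extra factorizations is where the real work lies; the rest of the theorem, as outlined above, is formal.
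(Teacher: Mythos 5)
Your proposal is correct and follows essentially the same route as the paper: invariance of the $e_r^{(s)}$ from \eqref{E:whirlcommute}, the fact that the generic fibre of $(x_1,\ldots,x_m)\mapsto M(x_1)\cdots M(x_m)$ is a free $S_m$-orbit of size $m!$, and then the field-degree comparison $\C(e_r^{(s)})\subseteq\C(X)^{S_m}\subseteq\C(X)$ with both indices equal to $m!$. The paper does not attempt your inductive ``peel off the last whirl'' count at all --- it simply imports the $m!$-to-$1$ statement on the totally positive locus from \cite[Corollary 6.4, Proposition 8.2]{LP}, which is exactly the fallback you identify, so the one step you leave incomplete is supplied in the same way in both arguments.
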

\begin{proof}
It follows from \eqref{E:whirlcommute} that $e_r^{(s)} \in \C(X)^{S_m}$ (see also \cite{Y}).  Now consider the map $g: X \to \C^{mn}$ given by $x \mapsto (e_r^{(s)})^{s \in \Z/n\Z}_{r \in \{1,2,\ldots,m\}}$.  It is shown in \cite[Corollary 6.4, Proposition 8.2]{LP}  that when $x_i^{(k)}$ are positive real numbers and the products $R_i=\prod_{k\in \Z/n\Z} x_i^{(k)}$ are all distinct the map $g$ is $n!$ to $1$.  Since this is a Zariski-dense subset of $X$, we conclude that $[\C(X):\C(e_r^{(s)})] = m!$.  But then we must have $\C(X)^{S_m} = \C(e_r^{(s)})$.  Since the transcendence degree of $\C(X)^{S_m}$ is $mn$, it follows that the $e_r^{(s)}$ are algebraically independent.  The last statement follows immediately from \eqref{E:Me}.
\end{proof}

\begin{remark}
In a future work we shall show the stronger result that $\C(X)^{S_m}\cap \C[X] = \C[e_r^{(s)}]$.
\end{remark}

\subsection{Crystal structure in terms of invariants}

The aim of this section is to completely describe the geometric crystal structure of $X$ in terms of the invariants $e_r^{(s)}$.
\begin{thm}\label{lem:epph}
Let $x = (x_1,x_2,\ldots,x_m) \in X$.  We have
$$
\ep_k(x) = \frac{e_m^{(k+1)}}{e_{m-1}^{(k+1)}} \qquad \ph_k(x) = \frac{e_m^{(k)}}{e_{m-1}^{(k+1)}} \qquad \gamma_k(x) = \frac{e_m^{(k)}}{e_{m}^{(k+1)}}.
$$
\end{thm}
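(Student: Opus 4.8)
The plan is to induct on $m$, decomposing $X = X_1 \times \cdots \times X_m$ as $(X_1 \times \cdots \times X_{m-1}) \times X_m$ and applying the product formulas~\eqref{E:epphproduct}. Abbreviate $e^{(s)}_r[j] := e^{(s)}_r(x_1,\ldots,x_j)$. The base case $m=1$ is the definition of $X_M$ read through the shift of indices on $X_1$: one has $\ep_k(x_1) = x_1^{(k+1)} = e^{(k+1)}_1[1]$, $\ph_k(x_1) = x_1^{(k)} = e^{(k)}_1[1]$, $\gamma_k(x_1) = x_1^{(k)}/x_1^{(k+1)}$, and $e^{(k+1)}_0[1] = 1$, which is exactly the asserted formula.

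For the inductive step I would substitute into~\eqref{E:epphproduct}, taking $(x_1,\ldots,x_{m-1})$ as the left tensor factor and $x_m$ as the right one. On the one-variable factor $X_m$ the shift gives $\ep_k(x_m) = x_m^{(k+m)}$ and $\ph_k(x_m) = x_m^{(k+m-1)}$, while the inductive hypothesis gives $\ep_k(x_1,\ldots,x_{m-1}) = e^{(k+1)}_{m-1}[m-1]/e^{(k+1)}_{m-2}[m-1]$ and $\ph_k(x_1,\ldots,x_{m-1}) = e^{(k)}_{m-1}[m-1]/e^{(k+1)}_{m-2}[m-1]$. After clearing the common denominator $e^{(k+1)}_{m-2}[m-1]$, formula~\eqref{E:epphproduct} yields
\[
\ep_k(x_1,\ldots,x_m) = \frac{x_m^{(k+m)}\, e^{(k+1)}_{m-1}[m-1]}{e^{(k+1)}_{m-1}[m-1] + x_m^{(k+m-1)}\, e^{(k+1)}_{m-2}[m-1]},
\]
and the analogous expression for $\ph_k$ has numerator $x_m^{(k+m-1)}\, e^{(k)}_{m-1}[m-1]$ over the identical denominator.

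Everything then collapses through the single recursion $e^{(s)}_r[j] = e^{(s)}_r[j-1] + x_j^{(s+r-1)}\, e^{(s)}_{r-1}[j-1]$, obtained by splitting the defining sum of $e^{(s)}_r$ according to whether its largest index equals $j$. Applied with $j=m$, $r=m$ and $e^{(s)}_m[m-1]=0$, it identifies the two numerators as $e^{(k+1)}_m[m]$ and $e^{(k)}_m[m]$; applied with $j=m$, $r=m-1$, $s=k+1$, it identifies the common denominator as $e^{(k+1)}_{m-1}[m]$. This gives $\ep_k = e^{(k+1)}_m/e^{(k+1)}_{m-1}$ and $\ph_k = e^{(k)}_m/e^{(k+1)}_{m-1}$, whence $\gamma_k = \ph_k/\ep_k = e^{(k)}_m/e^{(k+1)}_m$ (equivalently, $\gamma_k$ is multiplicative under the tensor product and $\gamma_k(x_i) = x_i^{(k+i-1)}/x_i^{(k+i)}$).

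There is no serious obstacle here; the induction is routine, and the only real danger is bookkeeping --- one must track the index shift on each factor $X_i$ carefully (so that $\ep_k, \ph_k$ on $X_m$ come out as the shifted coordinates $x_m^{(k+m)}$ and $x_m^{(k+m-1)}$) and keep the left/right asymmetry of~\eqref{E:epphproduct} straight, with $X_m$ the right-hand factor. I would also record, as a consistency check, that these are precisely the functions obtained by pulling back the rational functions $\ep_k(Y) = y_{k+1,k+m+1}/y_{k+1,k+m}$ and $\ph_k(Y) = y_{k,k+m}/y_{k+1,k+m}$ on $U$ along $x \mapsto M(x_1)\cdots M(x_m)$ and invoking~\eqref{E:Me}; I would not use this as a proof at this stage, since the identification of the two geometric crystal structures on $U^{\leq m}$ is established only afterwards.
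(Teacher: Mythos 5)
Your proposal is correct and follows essentially the same route as the paper: induction on $m$ via the decomposition $(x_1,\ldots,x_{m-1})\times x_m$, the product formulas \eqref{E:epphproduct}, and the recursion $e^{(s)}_r(x_1,\ldots,x_m)=e^{(s)}_r(x_1,\ldots,x_{m-1})+x_m^{(s+r-1)}e^{(s)}_{r-1}(x_1,\ldots,x_{m-1})$. The index bookkeeping ($\ep_k(x_m)=x_m^{(k+m)}$, $\ph_k(x_m)=x_m^{(k+m-1)}$) and the orientation of the tensor product are handled exactly as in the paper's argument.
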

\begin{proof}
We proceed by induction on $m$.  For $m = 1$, we have $\ep_k(x) = x_1^{(k+1)}$ and $ \ph_k(x) = x_1^{(k)}$, agreeing with the theorem.

Let $x' = (x_1,\ldots,x_{m-1})\in X_1 \times X_2 \times \cdots \times X_{m-1}$.  Supposing the result is true for $m-1$, we calculate that
\begin{align*}
\ep_k(x',x_{m-1})
&= \frac{\ep_k(x') \ep_k(x_m)}{\ph_k(x_m)+\ep_k(x')} \\
&= \frac{e_{m-1}^{(k+1)}(x_1,\ldots,x_{m-1})x_m^{(k+m)}}{e_{m-2}^{(k+1)}(x_1,\ldots,x_{m-1})x_m^{(k+m-1)} + e_{m-1}^{(k+1)}(x_1,\ldots,x_{m-1})}\\
&=\frac{e_m^{(k+1)}(x_1,\ldots,x_m)}{e_{m-1}^{(k+1)}(x_1,\ldots,x_m)}.
\end{align*}
The calculation for $\ph_k$ is similar. For $\gamma_k$ we use $\gamma_k = \ph_k/\ep_k$.
\end{proof}

The next theorem completes the description of the geometric crystal structure in terms of the $e_r^{(s)}$. 

\begin{theorem}\label{thm:e}
The map $(e_k^c)^*:\C(X)^{S_m} \to \C(X)^{S_m}$ induced by $e_k^c: X \to X$ is given by $$M(x) \mapsto u_k((c-1) \ph_k)\; M(x)\; u_{k+m}((c^{-1}-1) \ep_k).$$ In other words  $e_r^{(s)}$ is sent to 
\begin{enumerate}
\item
$\qquad \qquad 
e_r^{(k)} + {(c-1)}  \ph_k e_{r-1}^{(k+1)} \qquad \text{if} \qquad s=k\neq k+m-r+1 \mod n
$
\item
$\qquad \qquad 
e_r^{(k+m-r+1)} + 
(c^{-1}-1) \; \ep_k  e_{r-1}^{(k+m-r+1)} \qquad \text{if} \qquad s= k+m-r+1\neq k \mod n
$
\item
$\qquad \qquad
e_r^{(k)}+
(c-1) \ph_k e_{r-1}^{(k+1)} + (c^{-1}-1) \ep_k e_{r-1}^{(k)} - \frac{(1-c)^2}{c} \ep_k \ph_k e_{r-2}^{(k+1)} 
$
if $s= k+m-r+1 =k \mod n$
\item
$\qquad \qquad
e_r^{(s)} \qquad \text {otherwise.}$
\end{enumerate}
\end{theorem}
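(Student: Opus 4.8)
The plan is first to establish the displayed identity in the statement --- that, under the identification of $\C(X)^{S_m}$ with $\C(U^{\leq m})$ from Theorem~\ref{thm:einv}, the map $(e_k^c)^*$ is pullback along $Y \mapsto u_k((c-1)\ph_k(Y))\,Y\,u_{k+m}((c^{-1}-1)\ep_k(Y))$ --- and then to read off formulas (1)--(4) by an explicit matrix computation. Because the birational $R$-matrices $s_j$ act by isomorphisms of geometric crystals, $e_k^c$ commutes with the $S_m$-action; hence $(e_k^c)^*$ preserves $\C(X)^{S_m}$ and descends to a rational automorphism of $U^{\leq m}$, so the assertion makes sense. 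It then suffices to prove the identity
\[
M(e_k^c(x)) \;=\; u_k\big((c-1)\ph_k(x)\big)\; M(x)\; u_{k+m}\big((c^{-1}-1)\ep_k(x)\big)
\]
in $U^{\leq m}$ for generic $x = (x_1,\dots,x_m) \in X$, where by Theorem~\ref{lem:epph} we may read $\ph_k(x) = e_m^{(k)}/e_{m-1}^{(k+1)}$ and $\ep_k(x) = e_m^{(k+1)}/e_{m-1}^{(k+1)}$.

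I would prove this by induction on $m$. For $m=1$, $e_k^c$ rescales the two whirl entries $x_1^{(k)} \mapsto c\,x_1^{(k)}$ and $x_1^{(k+1)} \mapsto c^{-1}x_1^{(k+1)}$, and one checks directly that left multiplication by $u_k((c-1)x_1^{(k)})$ followed by right multiplication by $u_{k+1}((c^{-1}-1)x_1^{(k+1)})$ achieves exactly this: the two superdiagonal entries come out as wanted, and the single entry off the superdiagonal, in position $(k,k+2)$, equals $(c-1)x_1^{(k)}x_1^{(k+1)} + (c^{-1}-1)\,c\,x_1^{(k)}x_1^{(k+1)} = 0$. For the inductive step write $x = (x',x_m)$ with $x' = x_1 \otimes \cdots \otimes x_{m-1}$, so that $M(x) = M(x')M(x_m)$ and, by \eqref{eq:c+}, $e_k^c(x) = (e_k^{c^+}(x'),\,e_k^{c/c^+}(x_m))$ with $c^+ = (c\,\ph_k(x_m)+\ep_k(x'))/(\ph_k(x_m)+\ep_k(x'))$. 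Applying the inductive hypothesis to $M(e_k^{c^+}(x'))$ (a product of $m-1$ whirls) and the $m=1$ case to $M(e_k^{c/c^+}(x_m))$ --- keeping in mind the shifted indexing of $X_m$, which makes $e_k^c$ act on $x_m$ at matrix position $(k+m-1,k+m)$ --- yields
\[
M(e_k^c(x)) = \big[\,u_k(A)\,M(x')\,u_{k+m-1}(B)\,\big]\cdot\big[\,u_{k+m-1}(C)\,M(x_m)\,u_{k+m}(D)\,\big]
\]
for explicit $A,B,C,D$. The key point is that the two inner factors carry the \emph{same} index $k+m-1$, so they combine into $u_{k+m-1}(B+C)$; a short manipulation of the definition of $c^+$, together with the product formulas \eqref{E:epphproduct}, gives $B+C = 0$ as well as $A = (c-1)\ph_k(x)$ and $D = (c^{-1}-1)\ep_k(x)$, which collapses the right-hand side to the asserted form.

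Granting the identity, I would obtain (1)--(4) by computing the matrix entries of $u_k((c-1)\ph_k)\,M(x)\,u_{k+m}((c^{-1}-1)\ep_k)$: left multiplication by $u_k(a)$ adds $a$ times row $k+1$ to row $k$, and right multiplication by $u_{k+m}(b)$ adds $b$ times column $k+m$ to column $k+m+1$. Using $M(x)_{i,j} = e_{j-i}^{(i)}$ from \eqref{E:Me} and substituting $a = (c-1)\ph_k$, $b = (c^{-1}-1)\ep_k$, the entry $e_r^{(s)} = M(x)_{s,s+r}$ receives the row correction exactly when $s \equiv k$ and the column correction exactly when $s+r \equiv k+m+1$, i.e.\ $s \equiv k+m-r+1$; the four alternatives produce cases (1), (2), (4), and --- when both conditions hold --- case (3). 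Case (3) is the one delicate point: there the column correction acts on the \emph{already modified} $(s,s+r-1)$ entry, producing the cross term $(c-1)(c^{-1}-1)\ep_k\ph_k e_{r-2}^{(k+1)} = -\frac{(1-c)^2}{c}\,\ep_k\ph_k e_{r-2}^{(k+1)}$; one should also verify, consistently with the identity already proved, that the resulting $(k,k+m+1)$ entry --- which a priori lies outside the band defining $U^{\leq m}$ --- vanishes, which follows on substituting $\ph_k = e_m^{(k)}/e_{m-1}^{(k+1)}$, $\ep_k = e_m^{(k+1)}/e_{m-1}^{(k+1)}$ and using $(c-1)+(c^{-1}-1) = (1-c)^2/c$.

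The main obstacle is really just bookkeeping: keeping the shifted indexing of the factors $X_i$ straight --- it is exactly this shift that forces the two ``middle'' one-parameter subgroups in the inductive step to share the index $k+m-1$ and thereby cancel --- and handling the coincident case (3) correctly. As in the proof of Theorem~\ref{thm:infinity} I would assume $n > 2$ throughout and note that $n=2$ is entirely analogous.
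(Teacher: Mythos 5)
Your proposal is correct and follows essentially the same route as the paper's proof: induction on $m$, writing $x=(x',x_m)$, applying the inductive hypothesis to $M(x')$ and the base case to $M(x_m)$, and observing that the two middle Chevalley factors share the index $k+m-1$ and cancel while the outer ones combine via \eqref{E:epphproduct} into $u_k((c-1)\ph_k(x))$ and $u_{k+m}((c^{-1}-1)\ep_k(x))$. Your subsequent extraction of cases (1)--(4) by row/column operations on $M(x)_{i,j}=e^{(i)}_{j-i}$, including the cross term in case (3) and the vanishing of the out-of-band entry, is the intended (and correct) way to read off the explicit formulas.
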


\begin{example}
Let $m=n=2$. The action of $e_1^{c}$ on the crystal is induced by the transformation $M(x) \mapsto$
\begin{eqnarray*}
\left(
\begin{array}{cccccc}
\ddots & \vdots & \vdots & \vdots  & \vdots   \\
\dots& 1 & (c-1) \ph_1& 0& 0& \dots \\
\dots&0&1&0&0&\dots \\
\dots&0  & 0 & 1 & (c-1) \ph_1 &  \dots\\
\dots& 0 & 0 &0 & 1 & \dots \\
& \vdots & \vdots & \vdots  & \vdots    &
\ddots\end{array} \right)
M(x)
\left(
\begin{array}{cccccc}
\ddots & \vdots & \vdots & \vdots  & \vdots   \\
\dots& 1 & (1-c) \ep_1/c& 0& 0& \dots \\
\dots&0&1&0&0&\dots \\
\dots&0  & 0 & 1 & (1-c) \ep_1/c &  \dots\\
\dots& 0 & 0 &0 & 1 & \dots \\
& \vdots & \vdots & \vdots  & \vdots   &
\ddots\end{array} \right)
\end{eqnarray*}
where 
\begin{eqnarray*}
M(x) = \left(
\begin{array}{cccccc}
\ddots & \vdots & \vdots & \vdots  & \vdots   \\
\dots& 1 & e_1^{(1)}& e_2^{(1)}& 0& \dots \\
\dots&0&1& e_1^{(2)}&e_2^{(2)}&\dots \\
\dots&0  & 0 & 1 & e_1^{(1)} &  \dots\\
\dots& 0 & 0 &0 & 1 & \dots \\
& \vdots & \vdots & \vdots  & \vdots    &
\ddots\end{array} \right), \qquad \ph_1 = e_2^{(1)}/e_1^{(2)}, \qquad \ep_1 = e_2^{(2)}/e_1^{(2)}.
\end{eqnarray*}
\end{example}

\begin{proof}
The proof is by induction on $m$. For $m=1$ the statement is easily verified from the definition of basic geometric crystal. Assume now $m>1$.  We let $x = (x', x_m)$, where $x' = (x_1, \ldots , x_{m-1})$. Then 
$$M(x') \mapsto u_k((c^+-1) \ph_k(x'))\; M(x')\; u_{k+m-1}((c^+-1) \ep_k(x')),$$
$$M(x_m) \mapsto u_{k+m-1}((c/c^+-1) \ph_k(x_m))\; M(x_m)\; u_{k+m}((c^+/c-1)\ep_k(x_m)).$$
Plugging in 
$$c^+ = \frac{c \ph_k(x_m)+ \ep_k(x')}{\ph_k(x_m)+ \ep_k(x')}$$ and using \eqref{E:epphproduct} we obtain 
$$M(x')M(x_m) \mapsto u_k\left(\frac{(c-1) \ph_k(x_m)}{\ph_k(x_m)+ \ep_k(x')} \ph_k(x')\right) M(x')\; u_{k+m-1}\left(\frac{(1-c) \ph_k(x_m)}{c \ph_k(x_m)+ \ep_k(x')} \ep_k(x')\right) $$ $$u_{k+m-1}\left(\frac{(c-1) \ep_k(x')}{c \ph_k(x_m)+ \ep_k(x')} \ph_k(x_m)\right) M(x_m)\; u_{k+m}\left(\frac{(1-c) \ep_k(x')}{c (\ph_k(x_m)+ \ep_k(x'))} \ep_k(x_m)\right) = $$ $$u_k((c-1) \ph_k(x))\; M(x)\; u_{k+m}((c^{-1}-1) \ep_k(x)).$$
\end{proof}

If $X$ is a geometric crystal and $\Gamma$ is a group of crystal automorphisms of $X$, we say that a geometric crystal $X'$ is the quotient of $X$ by $\Gamma$, if we have a rational map $X \to Y$, commuting with the geometric crystal structure, and inducing $\C(Y) \simeq \C(X)^{\Gamma}$.

\begin{cor}\label{cor:same}
The map $(x_1,\ldots,x_m) \mapsto M(x_1)\cdots M(x_m)$ identifies the quotient of the product geometric crystal $X = X_M^m$ by the $S_m$ birational $R$-matrix action with the geometric crystal on $U^{\leq m}$ of Theorem \ref{thm:Ucrystal}.
\end{cor}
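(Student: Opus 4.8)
The plan is to verify that the birational map $\phi \colon X \to U^{\leq m}$ sending $x = (x_1,\ldots,x_m)$ to $M(x_1)M(x_2)\cdots M(x_m)$ satisfies the three requirements in the definition of quotient geometric crystal given just above: that $\phi$ is dominant and induces an isomorphism $\C(U^{\leq m}) \simeq \C(X)^{S_m}$; that $\phi$ intertwines the functions $\ep_k,\ph_k,\gamma_k$; and that $\phi$ intertwines the crystal operators $e_k^c$. By \eqref{E:Me} the matrix $M(x_1)\cdots M(x_m)$ is supported on the diagonals $0,1,\ldots,m$, so $\phi$ does land in $U^{\leq m}$, with $\phi^*(y_{i,j}) = e^{(i)}_{j-i}$. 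I will also use the general principle that the quotient of a geometric crystal by a group of geometric crystal automorphisms is again a geometric crystal: the invariant rational functions $\ep_i,\gamma_i$ descend, the operators $e_i^c$ (which commute with the group) descend to rational $\C^*$-actions, and axioms (1)--(5) pass to the quotient. Since $X_M^m$ is a geometric crystal and $S_m$ acts on it by the $R$-matrix through geometric crystal isomorphisms, this yields, once the identification below is established, the portion of Theorem \ref{thm:Ucrystal} asserting that $U^{\leq m}$ with the stated data is a geometric crystal.

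For the first requirement: by \eqref{E:whirlcommute} the $e_r^{(s)}$ are $S_m$-invariant, so $\phi^*$ carries $\C[U^{\leq m}] = \C[y_{i,j}]$ onto $\LSym_m$; by Theorem \ref{thm:einv} this is an isomorphism $\C[U^{\leq m}] \simeq \LSym_m$, and the fraction field of $\LSym_m$ is $\C(e_r^{(s)}) = \C(X)^{S_m}$. Hence $\phi^*$ induces an isomorphism $\C(U^{\leq m}) \simeq \C(X)^{S_m}$ and $\phi$ is dominant.

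For the second requirement: substituting $y_{i,j} = e^{(i)}_{j-i}$ into the formulas $\ep_k(Y) = y_{k+1,k+m+1}/y_{k+1,k+m}$ and $\ph_k(Y) = y_{k,k+m}/y_{k+1,k+m}$ defining the crystal structure on $U^{\leq m}$ in Theorem \ref{thm:Ucrystal} gives $\ep_k\circ\phi = e_m^{(k+1)}/e_{m-1}^{(k+1)}$ and $\ph_k\circ\phi = e_m^{(k)}/e_{m-1}^{(k+1)}$. By Theorem \ref{lem:epph} these equal the functions $\ep_k,\ph_k$ of the product crystal $X$, which, being preserved by the $R$-matrix, are precisely the functions that descend to the quotient; and $\gamma_k = \ph_k/\ep_k$ matches likewise. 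So $\phi$ intertwines $\ep_k,\ph_k,\gamma_k$.

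For the third requirement: since the $R$-matrix acts by geometric crystal automorphisms, $e_k^c$ on $X$ commutes with $S_m$ and descends to a rational $\C^*$-action on the quotient; Theorem \ref{thm:e} identifies this descended action, read on $\C[U^{\leq m}] = \C[y_{i,j}]$, with $M(x) \mapsto u_k((c-1)\ph_k)\, M(x)\, u_{k+m}((c^{-1}-1)\ep_k)$, which by \eqref{E:action} is exactly the operator $e_k^c$ of the geometric crystal on $U^{\leq m}$ of Theorem \ref{thm:Ucrystal}. Thus $\phi$ intertwines the $e_k^c$, and combining the three points, $\phi$ realizes the geometric crystal of Theorem \ref{thm:Ucrystal} on $U^{\leq m}$ as the quotient of $X_M^m$ by the $S_m$ $R$-matrix action, as claimed. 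There is no genuine obstacle: the substantive content sits in Theorems \ref{thm:einv}, \ref{lem:epph}, and \ref{thm:e}, and the only points requiring attention are bookkeeping the index shifts between $X$ and $U^{\leq m}$ and noting, before invoking Theorem \ref{thm:e}, that the $R$-matrix commutes with the $e_k^c$ so that the crystal operators genuinely descend to $U^{\leq m}$.
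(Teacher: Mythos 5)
Your proposal is correct and follows essentially the same route as the paper, which states the corollary without proof precisely because it is the direct assembly of Theorem \ref{thm:einv} (identifying $\C(U^{\leq m})$ with $\C(X)^{S_m}$ via the $e_r^{(s)}$), Theorem \ref{lem:epph} (matching $\ep_k,\ph_k,\gamma_k$), and Theorem \ref{thm:e} (matching the operators $e_k^c$). Your additional care about the $R$-matrix commuting with the crystal operators and the index bookkeeping is exactly the implicit content, so there is nothing to add.
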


\begin{remark}
Our whirls play a similar role to the $M$-matrices of \cite{KNO}, which have been studied for all affine types.  This suggests that Theorem \ref{thm:e} and also our work on total positivity \cite{LP} may have a generalization to other types in the same spirit.
\end{remark}


\section{Action of crystal operators on the energy function}

We recall the definition of the {\it {loop Schur functions}} that were introduced in \cite{LP}. Let $\lambda/\mu$ be a skew Young diagram shape. A square $s = (i,j)$ in the $i$-th row and $j$-th column has {\it content} $c(s)=i-j$.  We caution that our notion of content is the negative of the usual one.  Recall that a semistandard Young tableaux $T$ with shape
$\lambda/\mu$ is a filling of each square $s \in \lambda/\mu$ with an
integer $T(s) \in \Z_{> 0}$ so that the rows are weakly-increasing, and columns are increasing.  For $r\in \Z/n\Z$, the $r$-weight $x^T$ of a tableaux $T$ is given by $x^T = \prod_{s \in \lambda/\mu}x_{T(s)}^{(c(s)+r)}$.

We shall draw our shapes and tableaux in English notation:
$$
\tableau[sY]{\bl \times&\bl \times&\bl \times&\bl \times&\bl \times& \circ &&& \bullet \\\bl \times&\bl \times&\bl \times& \circ &&&& \bullet \\\bl \times&\bl \times&\bl \times&&& \bullet} \qquad
\tableau[sY]{\bl &\bl&1&1&1&3\\1&2&2&3&4\\3&3&4}
$$
For $n = 3$ the $0$-weight of the above tableau is $(x_1^{(1)})^2 (x_3^{(1)})^3
x_1^{(2)} x_2^{(2)} x_3^{(2)} x_1^{(3)} x_2^{(3)} (x_4^{(3)})^2.$
We define the {\it loop (skew) Schur function} by
$$
s^{(r)}_{\lambda/\mu}({x}) = \sum_{T} x^T
$$
where the summation is over all semistandard Young tableaux of
(skew) shape $\lambda/\mu$.  Loop Schur functions are of significance in the theory of geometric crystals because of the following result.  Let $\delta_m = (m,m-1,\ldots,1)$ denote the staircase shape of size $m$.

\begin{theorem} \cite[Theorem 1.2]{LP2}
The birational energy function $\D_B$ of $X = X_1 \times \cdots \times X_m$ is the loop Schur function $s^{(0)}_{(n-1)\delta_{m-1}}(x_1,\ldots,x_m)$.
\end{theorem}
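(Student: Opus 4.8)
Here is a plan for proving that $\D_B = s^{(0)}_{(n-1)\delta_{m-1}}$.

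The starting observation is that this identity should be viewed as the loop-symmetric-function analogue of the classical factorization $s_{\delta_{m-1}}(x_1,\dots,x_m)=\prod_{1\le i<j\le m}(x_i+x_j)$. Indeed, recall how $\D_B$ is assembled (following \cite{KNO} and \cite{LP2}): for the two-fold product $X_M\times X_M$ there is a birational local energy $H_B\colon X_M\times X_M\to\C^*$, pinned down by $R$-matrix invariance, invariance under $e_k^c$ for $k\not\equiv 0$, its prescribed scaling under $e_0^c$, and a normalization; and the intrinsic energy of $X_1\times\cdots\times X_m$ is the \emph{product}
$$
\D_B(x_1,\dots,x_m)=\prod_{1\le a<b\le m} H_B\!\left(x_a^{(b)},\,x_b\right),
$$
where $x_a^{(b)}$ is the left component obtained by transporting $x_a$ past $x_{a+1},\dots,x_{b-1}$ with the birational $R$-matrix (the maps $s_j$). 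The first step is to check, directly from \eqref{E:epphproduct}, \eqref{eq:c+} and the formula for the $s_j$, that $H_B(x_1,x_2)$ is exactly the one-row loop Schur function $s^{(0)}_{(n-1)}(x_1,x_2)$ (equivalently, up to a shift of indices, the polynomial $\k$ appearing in the $R$-matrix) --- this is the loop analogue of the factor $x_1+x_2$. This settles $m=1$ (empty shape, $\D_B\equiv 1$) and $m=2$.

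For the inductive step, split off the last tensor factor. Since the $R$-matrices only move $x_a$ rightward,
$$
\D_B(x_1,\dots,x_m)=\D_B(x_1,\dots,x_{m-1})\cdot\prod_{a=1}^{m-1}H_B\!\left(x_a^{(m)},x_m\right),
$$
so, by the inductive hypothesis, the theorem reduces to the purely combinatorial identity
$$
s^{(0)}_{(n-1)\delta_{m-1}}(x_1,\dots,x_m)=s^{(0)}_{(n-1)\delta_{m-2}}(x_1,\dots,x_{m-1})\cdot\prod_{a=1}^{m-1}H_B\!\left(x_a^{(m)},x_m\right).
$$
This I would prove by a Pieri/branching argument: deleting all entries equal to $m$ from a semistandard tableau of shape $(n-1)\delta_{m-1}$ leaves a semistandard tableau on a subshape $\nu$ with $(n-1)\delta_{m-1}/\nu$ a horizontal strip, giving $s^{(0)}_{(n-1)\delta_{m-1}}=\sum_\nu s^{(0)}_\nu(x_1,\dots,x_{m-1})\cdot x_m^{\mathrm{wt}((n-1)\delta_{m-1}/\nu)}$; one then straightens the $\nu$-sum --- using that loop Schur functions lie in $\LSym_m$, hence are invariant under the birational $R$-matrix by \eqref{E:whirlcommute} --- to collapse it to $s^{(0)}_{(n-1)\delta_{m-2}}$ times the product of the $\k$-type factors, with the twists $x_a\mapsto x_a^{(m)}$ produced precisely by the re-sorting of the first $m-1$ factors during straightening. (Alternatively, one can run the classical bialternant proof verbatim: write $s^{(0)}_{(n-1)\delta_{m-1}}$ via the loop (dual) Jacobi--Trudi expansion in the $e_r^{(s)}$, recognize a ``loop Vandermonde in squares,'' and factor.)

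The main obstacle in either route is bookkeeping of the colour shifts: the loop identity is not as symmetric as $s_{\delta_{m-1}}=\prod(x_i+x_j)$, because the superscripts $(s)$ on the $e_r^{(s)}$ and the $R$-matrix twists $x_a\mapsto x_a^{(m)}$ must be tracked through the straightening/determinant manipulation, and getting the $r$-shifted form of the branching rule exactly right is the delicate point. A way to sidestep the explicit bijection is to verify instead that $s^{(0)}_{(n-1)\delta_{m-1}}$ satisfies the axioms characterizing $\D_B$: it lies in $\LSym_m=\C[e_r^{(s)}]$ (Theorem \ref{thm:einv}), hence is $R$-matrix invariant; by Theorem \ref{thm:schuraction} one can compute its transformation under each $e_k^c$, check invariance for $k\not\equiv 0$ and the prescribed $e_0^c$-scaling, and conclude by uniqueness of the normalized energy function --- on this route the hard point migrates to pinning down the exact $e_0^c$-scaling of $\D_B$ and confirming that Theorem \ref{thm:schuraction} yields the same factor for the dilated staircase.
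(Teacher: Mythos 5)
First, a point of reference: this theorem is not proved in the paper at hand --- it is quoted from \cite{LP2} --- so there is no in-text argument to compare yours against. Judged on its own terms, your outline has the right skeleton and is consistent with how such a result must go: express $\D_B$ as the product $\prod_{a<b}H_B(x_a^{(b)},x_b)$ of local energies transported by the birational $R$-matrix, identify the two-factor local energy with the one-row loop Schur function $s^{(0)}_{(n-1)}$ (a $\kappa$-type polynomial up to an index shift), which settles $m=1,2$, and then reduce the general case to a product formula for the dilated staircase, the loop analogue of $s_{\delta_{m-1}}=\prod_{i<j}(x_i+x_j)$.

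The genuine gap is that this product formula,
$$
s^{(0)}_{(n-1)\delta_{m-1}}(x_1,\dots,x_m)=s^{(0)}_{(n-1)\delta_{m-2}}(x_1,\dots,x_{m-1})\cdot\prod_{a=1}^{m-1}H_B\bigl(x_a^{(m)},x_m\bigr),
$$
is exactly where the content of the theorem lives, and you do not prove it. The horizontal-strip branching you invoke yields a \emph{sum} $\sum_\nu s^{(0)}_\nu(x_1,\dots,x_{m-1})\,x_m^{\mathrm{wt}((n-1)\delta_{m-1}/\nu)}$ over subshapes $\nu$, while the right-hand side is a \emph{product} involving the $R$-matrix--twisted arguments $x_a^{(m)}$; ``straightening via $S_m$-invariance'' does not obviously bridge the two, because the twists come from the definition of $\D_B$ rather than from any permutation of $x_1,\dots,x_{m-1}$ among themselves, and no mechanism is given for collapsing the $\nu$-sum into the stated product with the correct colour shifts. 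Your fallback route (characterize $\D_B$ axiomatically and verify the axioms for the staircase loop Schur function using Theorem \ref{thm:schuraction}) is viable in principle --- the paper's corollary already gives $(e_k^c)^*$-invariance for $k\neq 0$ --- but it additionally requires (i) the precise $e_0^c$-scaling that characterizes energy, (ii) a uniqueness statement for a rational function with those transformation properties, and (iii) the normalization; none of these is supplied. In short: a correct roadmap, with the central identity asserted rather than proved.
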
 

Note that unlike $\ep_k$ and $\ph_k$, the birational energy function is a polynomial.  We have the following analog of the Jacobi-Trudi formula.

\begin{thm} \cite[Theorem 7.6]{LP} \label{thm:JT}
We have $s^{(r)}_{\lambda/\mu} = \det(e_{\lambda'_i-\mu'_j-i+j}^{(r-j+1+\mu'_j)})$.
\end{thm}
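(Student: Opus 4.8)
The plan is to prove the formula exactly as one proves the dual Jacobi--Trudi identity, namely by the Lindström--Gessel--Viennot (LGV) lemma, but decorating the planar network with ``contents'' so that the loop structure is carried along.

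First I would realize each loop elementary symmetric function $e_\ell^{(s)}(x_1,\dots,x_m)$ as a path generating function in a planar acyclic network $\Gamma$. Take $\Gamma$ to be the usual staircase lattice that models $e_\ell$: a directed path records a variable $x_v$ each time it makes a designated \emph{column step} at level $v\in\{1,\dots,m\}$, and the levels of successive recorded variables are forced to strictly increase, so a path with exactly $\ell$ column steps has weight $x_{v_1}\cdots x_{v_\ell}$ over $v_1<\cdots<v_\ell$. The new ingredient is to attach to each column step a \emph{content} $c\in\Z$ that depends only on its geometric position (it increases by $1$ at each column step and is constant along the non-recording steps), and to weight a column step recording $x_v$ at content $c$ by $x_v^{(c+r)}$. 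With sources $A_1,\dots,A_\ell$ ($\ell=\lambda'_1$) and sinks $B_1,\dots,B_\ell$ placed so that a path from $A_j$ to $B_i$ makes exactly $\lambda'_i-\mu'_j-i+j$ column steps, the first at content $\mu'_j-j+1$ and the last at content $\lambda'_i-i$, a short computation shows that the generating function over all such paths is exactly $e_{\lambda'_i-\mu'_j-i+j}^{(r-j+1+\mu'_j)}$: the contents of its column steps run through the consecutive integers $\mu'_j-j+1,\dots,\lambda'_i-i$, so the upper indices run through $r-j+1+\mu'_j,\ r-j+2+\mu'_j,\dots$, matching the definition of $e_\ell^{(s)}$; the conventions $e_r^{(s)}=1$ for $r=0$ and $e_r^{(s)}=0$ for $r<0$ correspond to the empty path and to the absence of any path.

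Since $\Gamma$ is planar and acyclic and, because $\lambda'$ and $\mu'$ are partitions, the numbers $\mu'_j-j+1$ and $\lambda'_i-i$ are each strictly decreasing in the index, the sources and sinks are in convex position and the LGV lemma applies verbatim: $\det\big(e_{\lambda'_i-\mu'_j-i+j}^{(r-j+1+\mu'_j)}\big)_{i,j}$ equals the signed sum over all path families $P_j\colon A_j\to B_{\sigma(j)}$, and the standard tail-swapping involution at the first intersection, which is weight-preserving and sign-reversing, cancels every term except those from non-intersecting families, all of which have $\sigma=\mathrm{id}$. One then invokes the classical bijection between non-intersecting families $(P_1,\dots,P_\ell)$ with $P_j\colon A_j\to B_j$ and semistandard Young tableaux $T$ of shape $\lambda/\mu$: $P_j$ encodes the $j$-th column of $T$, its recorded levels being the column entries read downward, and non-intersection is exactly weak increase along rows. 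By construction the variable recorded for the square $(i,j)\in\lambda/\mu$ sits at content $i-j$, so $\mathrm{wt}(P_1,\dots,P_\ell)=\prod_{s\in\lambda/\mu}x_{T(s)}^{(c(s)+r)}=x^T$, and summing over all $T$ yields $s_{\lambda/\mu}^{(r)}$.

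The step I expect to be the main obstacle is the content bookkeeping: one must set up $\Gamma$ so that the content of a column step genuinely depends only on its position (not on which path uses it) and so that along every source-to-sink path the contents are exactly the consecutive integers forced by the definition of $e_\ell^{(s)}$, and one must check that this is consistent with reducing all upper indices modulo $n$. Everything else is the classical LGV proof of dual Jacobi--Trudi. Should the explicit planar model prove cumbersome, an alternative is induction on the number of columns of $\lambda$ via cofactor expansion along the last column together with a column-removal recursion for $s_{\lambda/\mu}^{(r)}$, but the LGV approach is cleaner and more transparent.
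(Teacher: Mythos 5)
The paper does not prove Theorem \ref{thm:JT} here --- it is quoted from \cite[Theorem 7.6]{LP} --- so there is no internal proof to compare against; but your LGV argument is correct and is essentially the intended one: the determinant is precisely a minor of the planar network $M(x_1)\cdots M(x_m)$ (rows $r+\mu'_j-j+1$, columns $r+\lambda'_i-i+1$, using \eqref{E:Me}), Lindstr\"om's lemma reduces it to non-intersecting path families, and these biject with semistandard tableaux column by column. Your content bookkeeping checks out: in the whirl network the upper index of a recorded variable is the row index of the edge, which depends only on position, runs through consecutive integers along any source-to-sink path, and reduces mod $n$ harmlessly, so the diagonal of contents matches $c(s)+r=i-j+r$ exactly as required.
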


This means that the skew loop Schur functions $s^{(r)}_{\lambda/\mu}(x_1,x_2,\ldots,x_m)$ are minors of the infinite periodic matrix $M(x) = M(x_1)\cdots M(x_m)$.  

For a skew shape $\lambda/\mu$ we distinguish its NW corners and SE corners (in English convention). The figure above shows the NW corners marked with $\circ$ and the SE corners marked with $\bullet$. Let $A(\lambda/\mu)_k^{(r)}$ (resp. $B(\lambda/\mu)_k^{(r)}$) denote the set of NW (resp. SE) corners of $\lambda/\mu$ of color $k$, that is corner cells $(i,j)$ with $i-j+r=k \mod n$.  
The proof of the following result follows from an examination of the effect of row and column operations  on Jacobi-Trudi matrices, and is omitted.

\begin{theorem}\label{thm:schuraction}
 We have \begin{eqnarray*} (e_k^c)^*(s^{(r)}_{\lambda/\mu}) = \sum_{B \subset B(\lambda/\mu)_{k+m}^{(r)}\; A \subset A(\lambda/\mu)_{k}^{(r)} \; A \cap B = \emptyset} 
((c^{-1}-1)\ep_k)^{|B|}\, ((c-1)\ph_k)^{|A|}\, s^{(r)}_{\lambda/\mu - A - B}.\end{eqnarray*}
\end{theorem}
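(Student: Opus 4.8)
The plan is to combine the Jacobi--Trudi formula (Theorem~\ref{thm:JT}), which exhibits $s^{(r)}_{\lambda/\mu}$ as a minor of the periodic matrix $M(x)=M(x_1)\cdots M(x_m)$, with the description of the crystal action $M(x)\mapsto u_k((c-1)\ph_k)\,M(x)\,u_{k+m}((c^{-1}-1)\ep_k)$ from Theorem~\ref{thm:e}, and then to expand that minor by multilinearity after the elementary row and column operations effected by the two one-parameter subgroups.

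\emph{Step 1 (the minor and its dictionary).} By Theorem~\ref{thm:JT} and \eqref{E:Me}, the $(i,j)$ entry of the Jacobi--Trudi matrix for $s^{(r)}_{\lambda/\mu}$ equals $M(x)_{p_j,q_i}$ with $p_j=r+1+\mu'_j-j$ and $q_i=r+1+\lambda'_i-i$. Since $p_1>p_2>\cdots$ and $q_1>q_2>\cdots$, the two reversal permutations needed to sort the row index set $R=\{p_j\}$ and the column index set $C=\{q_i\}$ cancel, so $s^{(r)}_{\lambda/\mu}=\Delta_{R,C}(M(x))$, the minor of $M(x)$ on rows $R$ and columns $C$. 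I would record the combinatorial dictionary here: an index $\ell\in R$ with $\ell\equiv k$ (mod $n$) and $\ell+1\notin R$ marks a NW corner of $\lambda/\mu$ of color $\ell$, namely the top cell of the relevant column, and replacing $\ell$ by $\ell+1$ in $R$ produces exactly the Jacobi--Trudi row set of $\lambda/\mu$ with that cell removed; symmetrically, $\ell\in C$ with $\ell\equiv k+m+1$ and $\ell-1\notin C$ marks an SE corner of color $\ell-1$, and replacing $\ell$ by $\ell-1$ in $C$ removes it. One must also check that all the Jacobi--Trudi signs stay $+1$ under these shifts, which holds because the shifted index remains strictly between its neighbours.

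\emph{Step 2 (the expansion).} By Theorem~\ref{thm:e}, $(e_k^c)^*$ carries any polynomial in the entries of $M(x)$ to the same polynomial in the entries of $u_k(a)\,M(x)\,u_{k+m}(b)$, where $a=(c-1)\ph_k$ and $b=(c^{-1}-1)\ep_k$. Left multiplication by $u_k(a)$ adds $a\cdot(\text{row }\ell+1)$ to row $\ell$ for every $\ell\equiv k$, and right multiplication by $u_{k+m}(b)$ adds $b\cdot(\text{column }\ell-1)$ to column $\ell$ for every $\ell\equiv k+m+1$. Expanding $\Delta_{R,C}\big(u_k(a)\,M(x)\,u_{k+m}(b)\big)$ multilinearly in the finitely many affected rows and columns gives
\[
(e_k^c)^*\big(s^{(r)}_{\lambda/\mu}\big)=\sum_{A,\,B}a^{|A|}\,b^{|B|}\,\Delta_{R[A],\,C[B]}(M(x)),
\]
where $R[A]$ shifts each $\ell\in A$ up by $1$, $C[B]$ shifts each $\ell\in B$ down by $1$, and the sum is over subsets $A$ of $\{\ell\in R:\ell\equiv k,\ \ell+1\notin R\}$ and subsets $B$ of $\{\ell\in C:\ell\equiv k+m+1,\ \ell-1\notin C\}$. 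Any term in which some row index or column index is repeated is zero, which is what makes these restrictions on $A$ and $B$ harmless.

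\emph{Step 3 and the main obstacle.} Iterating the dictionary of Step~1: for disjoint $A$ and $B$ the sets $R[A]$ and $C[B]$ are exactly the Jacobi--Trudi data of the skew shape $\lambda/\mu-A-B$, so $\Delta_{R[A],C[B]}(M(x))=s^{(r)}_{\lambda/\mu-A-B}$; while if $A$ and $B$ share a cell $s$ then $s$ is simultaneously a NW corner of color $k$ and an SE corner of color $k+m$ (hence $n\mid m$), the two shifts place a common index into $R[A]$ and $C[B]$ that sits on a unit diagonal entry of $M(x)$, and the minor vanishes --- this is precisely why the constraint $A\cap B=\emptyset$ appears. Substituting for $a$ and $b$ and relabelling $A\subset A(\lambda/\mu)^{(r)}_k$, $B\subset B(\lambda/\mu)^{(r)}_{k+m}$ then yields the stated formula. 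The main obstacle --- and the reason the paper omits the proof --- is the bookkeeping in Steps~1 and~3: matching the ``gaps'' of $R$ and $C$ with the NW and SE corners of $\lambda/\mu$, tracking the signs, and verifying that the degenerate configurations (empty rows or columns of $\lambda/\mu$, a column removed in its entirety, coincidences of colors when $n\mid m$) all contribute $0$ rather than spurious terms. Once that case analysis is settled, the multilinear expansion of Step~2 completes the argument.
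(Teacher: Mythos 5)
Your argument is correct and is exactly the approach the paper indicates for its omitted proof: realize $s^{(r)}_{\lambda/\mu}$ as a minor of $M(x)$ via Theorem~\ref{thm:JT}, apply the factorization $M(x)\mapsto u_k((c-1)\ph_k)\,M(x)\,u_{k+m}((c^{-1}-1)\ep_k)$ of Theorem~\ref{thm:e}, and expand the resulting row and column operations multilinearly, matching the admissible index shifts with removals of NW and SE corners of the appropriate colors. One small correction to your Step~3: when $A\cap B\neq\emptyset$ the shifted row $p_j+1$ and shifted column $q_j-1=p_j$ do not land on a common diagonal entry; rather, the rows $\geq p_j+1$ and the columns $\leq p_j$ of the shifted minor form a zero block (every such entry is $e^{(\cdot)}_{q-p}$ with $q<p$) whose dimensions sum to more than the order of the matrix, which is what forces that determinant to vanish.
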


We recover the following property of the birational energy function $\D_B$, paralel to the analogous property of the combinatorial energy function. 

\begin{corollary}
 For $k \not = 0 \mod n$ we have $(e_k^c)^*(\D_B) = \D_B$.
\end{corollary}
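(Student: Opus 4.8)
The plan is to read the corollary off Theorem~\ref{thm:schuraction}, using the identification of the birational energy function $\D_B$ with the loop Schur function $s^{(0)}_{(n-1)\delta_{m-1}}$ of dilated staircase shape. The key point is that, setting $\lambda=(n-1)\delta_{m-1}$, this is a straight shape (empty inner shape $\mu$), so its NW corners and SE corners, together with their colors, can be listed explicitly; the corollary then reduces to showing that for $k\not\equiv 0\bmod n$ the index sets $A(\lambda)^{(0)}_{k}$ and $B(\lambda)^{(0)}_{k+m}$ appearing in that formula are both empty, which forces the only surviving term of the sum to be the one with $A=B=\emptyset$.

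First I would identify the corners. For a partition shape, the unique cell with neither its upward neighbor nor its leftward neighbor in the shape is $(1,1)$, which has content $c(1,1)=1-1=0$; hence $A(\lambda)^{(0)}_{k}$ equals $\{(1,1)\}$ when $k\equiv 0\bmod n$ and is empty otherwise. Writing $(n-1)\delta_{m-1}=\big((n-1)(m-1),(n-1)(m-2),\ldots,(n-1)\big)$, the $i$-th row has length $(n-1)(m-i)$, which is strictly larger than that of the next row, so the removable (SE) corners are precisely the cells $s_i=\big(i,(n-1)(m-i)\big)$ for $i=1,\ldots,m-1$.

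Next I would compute the color of $s_i$. With the paper's convention $c(s)=i-j$ we get
$$
c(s_i)=i-(n-1)(m-i)=ni-nm+m\equiv m \pmod n,
$$
independently of $i$. Therefore $s_i$ lies in $B(\lambda)^{(0)}_{k+m}$ if and only if $m\equiv k+m\bmod n$, that is, if and only if $k\equiv 0\bmod n$; so $B(\lambda)^{(0)}_{k+m}=\emptyset$ whenever $k\not\equiv 0\bmod n$. Combining this with the previous paragraph, for $k\not\equiv 0\bmod n$ both $A(\lambda)^{(0)}_{k}$ and $B(\lambda)^{(0)}_{k+m}$ are empty, so the sum in Theorem~\ref{thm:schuraction} collapses to the single term $A=B=\emptyset$, yielding $(e_k^c)^*\big(s^{(0)}_{(n-1)\delta_{m-1}}\big)=s^{(0)}_{(n-1)\delta_{m-1}}$, i.e.\ $(e_k^c)^*(\D_B)=\D_B$.

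I do not expect a genuine obstacle: the whole argument rests on the residue computation $c(s_i)\equiv m\pmod n$ above. The points requiring care are the paper's sign convention for content ($i-j$ rather than $j-i$) and the shift by $m$ between the color of an SE corner and the subscript appearing in $B(\lambda)^{(r)}_{k+m}$; one should also confirm that the dilated staircase $(n-1)\delta_{m-1}$ is read as widening each row by the factor $n-1$, since this is exactly what makes all removable corners occur in rows $1,\ldots,m-1$ with the uniform residue $m$.
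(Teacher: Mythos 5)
Your proof is correct and follows the same route as the paper: both arguments reduce the corollary to checking that the dilated staircase has a unique NW corner, of color $0$, and SE corners all of color $m \bmod n$, so that the sum in Theorem~\ref{thm:schuraction} collapses to the $A=B=\emptyset$ term when $k\not\equiv 0$. Your version simply makes explicit the residue computation $c\bigl(i,(n-1)(m-i)\bigr)\equiv m \pmod n$ that the paper asserts without calculation.
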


\begin{proof}
The shape $(n-1) \delta_{m-1}$ has a single NW corner of color $0$ and all its SE corners are of color $m$. Thus only for $k=0$ the sets $A((n-1) \delta_{m-1})_k^{(0)}$ and $B((n-1) \delta_{m-1})_{k+m}^{(0)}$ are non-empty.
\end{proof}

\end{document}